\documentclass[12pt,a4paper]{amsart}

\textheight23cm \textwidth16cm
\oddsidemargin0cm \evensidemargin0cm \topmargin0cm \topskip0cm \footskip1.5cm

\usepackage{amsmath, amssymb, amscd}

\newcommand{\N}{\mathbb N}
\newcommand{\Z}{\mathbb Z}
\def\P{\mathbb P}
\newcommand{\Q}{\mathbb Q}

\def\bad{\text{\rm bad}}
\def\Int{\text{\rm Int}\,}
\def\lcm{\text{\rm lcm}}
\def\mc{\mathcal}
\def\phi{\varphi}
\def\res#1{\text{\rm Res}_{#1}\,}

\newtheorem {Th}{Theorem}
\newtheorem {Cor}[Th]{Corollary}
\newtheorem {Le}[Th]{Lemma}
\newtheorem {Pro}[Th]{Proposition}

\theoremstyle{remark}
\newtheorem*{Rem}{Remarks}
\newtheorem {Ex}{Example}

\begin{document}

\title [Polynomial parametrization of the solutions of Diophantine equations]
{Polynomial parametrization of the solutions of Diophantine equations of genus 0}

\author{Sophie Frisch \ and \ G\"unter Lettl}

\dedicatory{Dedicated to Prof.~W{\l}adys{\l}aw Narkiewicz on the occasion of his 70$^{\text th}$ birthday.}

\address{Institut f\"ur Mathematik A\\
Technische Universit\"at Graz\\
Steyrergasse 30\\ A-8010  Graz, AUSTRIA}
\email{frisch@blah.math.tu-graz.ac.at}

\address{Institut f\"ur Mathematik und wissenschaftliches Rechnen\\
Karl-Franzens-Universit\"at\\
Heinrichstra{\ss}e 36\\ A-8010  Graz, AUSTRIA}
\email{guenter.lettl@uni-graz.at}

\subjclass[2000]{Primary 11D85, secondary 13F20, 11D41, 14H05}

\begin{abstract}
Let $f \in \Z[X,Y,Z]$ be a non-constant, absolutely irreducible, homogeneous polynomial with integer coefficients, such that the projective curve given by $f=0$ has a function field isomorphic to the rational function field $\Q (T)$. We show that all integral solutions of the Diophantine equation $f=0$ (up to those corresponding to some singular points) can be parametrized by a single triple of integer-valued polynomials. In general, it is not possible to parametrize this set of solutions by a single triple of polynomials with integer coefficients. \bigskip
\end{abstract}

\maketitle

Recently, the first author and L.~Vaserstein proved that the set of all Pythagorean triples can be parametrized by a single triple of integer-valued polynomials, but not by a single triple of polynomials with integer coefficients (in any number of variables) \cite{FrV}. We denote by $\Int(\Z^m)$ the ring of integer-valued polynomials in $m$ variables,
\[ \Int(\Z^m) = \{ \phi \in \Q[X_1, \dots, X_m] \mid \phi(\Z^m) \subset \Z \}\,.
\]
In this paper we will generalize the affirmative part of \cite{FrV} to such homogeneous equations as define a (plane) projective curve with a rational function field.
\smallskip

Throughout this paper, $f \in \Z[X,Y,Z] \setminus \{0\}$ denotes an irreducible polynomial with integer coefficients, which is homogeneous of degree $n \ge 1$. Let $\overline{\Q}$ be an algebraic closure of $\Q$ and $C_f \subset \P^2 (\overline{\Q})$ the plane projective curve defined by $f=0$,
\[ C_f = \bigl\{ (x:y:z) \in \P^2 (\overline{\Q}) \mid f(x,y,z)=0 \bigr\}\ .
\]
We will further suppose that the function field $K = \Q (C_f)$ of $C_f$ over $\Q$ is isomorphic to the rational function field $\Q(T)$. This implies that $f$ is absolutely irreducible (i.e., irreducible in $\overline{\Q}[X,Y,Z]$). Our assumption is satisfied, for instance, if $C_f$ has genus $0$ and possesses a regular point defined over $\Q$.

Recall that a point $(x:y:z) \in C_f$ is singular if and only if the local ring $R_{(x:y:z)} \subset K$ of all rational functions of $C_f$ that are defined at $(x:y:z)$ is not a discrete valuation ring (cf. \cite[pp. 56-57]{K}). In this case, there are finitely many discrete valuation rings $\mc O_{P_i} \subset K$ above $R_{(x:y:z)}$ (meaning $R_{(x:y:z)} \subset \mc O_{P_i}$ and $\frak m_{(x:y:z)} \subset P_i$, where $\frak m_{(x:y:z)}$ and $P_i$ denote the corresponding maximal ideals). Let $C_f^{\bad}$ denote the set of those singular points $(x:y:z) \in C_f$ for which there exists no discrete valuation ring $\mc O_P$ above $R_{(x:y:z)}$ with $\mc O_P /P \simeq \Q$. These points will be ``bad'' for our main theorem.
\smallskip

We investigate the set of integer solutions of the Diophantine equation $f(X,Y,Z) = 0$,
\[ \mc L_f := \bigl\{ (x,y,z) \in \Z^3 \mid f(x,y,z) =0 \bigr\}\,,
\]
up to those solutions which correspond to the ``bad'' points of the curve. We set
\[ \mc L_f^{\bad} = \{ (x,y,z) \in \mc L_f \mid (x:y:z) \in C_f^{\bad} \}\,.
\]

\begin{Th} \label{Th1}
Let $f \in \Z[X,Y,Z] \setminus \{0\}$ be an irreducible, homogeneous polynomial of degree $n \ge 1$ such that the function field $K = \Q (C_f)$ is isomorphic to $\Q(T)$.\newline
Then there exist polynomials $g_1, g_2, g_3 \in \Int(\Z^m)$ for some $m \in \N$ such that
\[ \mc L_f \setminus \mc L_f^{\bad} = \Bigl\{ \bigl( g_1(\underline{x}), g_2(\underline{x}),  g_3(\underline{x}) \bigr) \ \Big| \ \underline{x} \in \Z^m \Bigr\}\, ;
\]
in other words, up to the ``bad'' solutions, all solutions of the Diophantine equation
\begin{equation}\label{eq1} f(X,Y,Z) = 0
\end{equation}
can be parametrized by one triple of integer-valued polynomials.
\end{Th}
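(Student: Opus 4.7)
The strategy is to exploit the isomorphism $K = \Q(C_f) \cong \Q(T)$ to obtain a rational parametrization of $C_f$ by polynomials in two variables, and then to absorb a ``fixed divisor'' obstruction into integer-valued polynomials that act as indicators of residue classes.

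I would first construct the polynomial parametrization. Applying the isomorphism to the coordinate ratios $X/Z$ and $Y/Z$ (or another suitable dehomogenization) and clearing denominators yields coprime homogeneous polynomials $F_1, F_2, F_3 \in \Z[S,T]$ of a common degree $N$ with $f(F_1, F_2, F_3) = 0$ identically. Via the isomorphism, DVRs of $K$ with residue field $\Q$ correspond exactly to the places of $\Q(T)$ with residue field $\Q$, i.e., to the rational points of $\P^1$; hence a projective point $(x:y:z) \in C_f$ is non-bad precisely when $(x:y:z) = (F_1:F_2:F_3)(s_0, t_0)$ for some $(s_0:t_0) \in \P^1(\Q)$. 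Writing a non-bad integer solution as $(x,y,z) = k(x_0, y_0, z_0)$ with $(x_0, y_0, z_0)$ primitive and picking coprime $s_0, t_0 \in \Z$ that parametrize $(x:y:z)$, one gets $(F_1, F_2, F_3)(s_0, t_0) = \pm d_0 \cdot (x_0, y_0, z_0)$, where $d_0 = \gcd(F_i(s_0, t_0))$.

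Next I would analyze the fixed divisor $d(s,t) := \gcd(F_i(s,t))$ on coprime pairs $(s,t) \in \Z^2$. Since the $F_i$ are pairwise coprime in $\Z[S,T]$, resultants bound $d(s,t)$ by a constant $D \in \N$; since the $F_i$ have integer coefficients, $F_i(s,t) \bmod e$ depends only on $(s,t) \bmod e$, so combining the periodicities at the finitely many prime-power divisors of $D$ via CRT yields a single modulus $M$ such that $d(s,t)$ depends only on $(s,t) \bmod M$. Using the standard fact that every $(\Z/M)^2$-periodic integer-valued function on $\Z^2$ lifts to a polynomial in $\Int(\Z^2)$, I obtain, for each divisor $e$ of $D$, an indicator $\chi_e \in \Int(\Z^2)$ supported on those residue classes where $d = e$ (on coprime pairs). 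Throughout the support of $\chi_e$ the divisibility $e \mid F_i(s,t)$ holds automatically for every $(s,t) \in \Z^2$ (coprime or not), since it is determined by $(s,t) \bmod e$. Setting
\[
 g_i(U, S, T) \ = \ U \sum_{e \mid D} \frac{F_i(S,T)}{e}\, \chi_e(S,T)
\]
then defines an element of $\Int(\Z^3)$.

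Finally one verifies that $(g_1, g_2, g_3)(u, s, t)$ is always a rational scalar multiple of $(F_1, F_2, F_3)(s,t)$, so it lies in $\mc L_f$, and its projective image lies in the image of the parametrization, hence is non-bad. Conversely, at a coprime pair $(s_0, t_0)$ parametrizing a given non-bad solution, only $\chi_{d_0}$ contributes, yielding $g_i(u, s_0, t_0) = u \cdot F_i(s_0, t_0) / d_0$, and a suitable $u \in \Z$ (absorbing the sign) recovers $(x, y, z)$; the degenerate solution $(0,0,0)$ is hit at $u = 0$. The main technical obstacle is the fixed-divisor analysis and the choice of $\chi_e$ so that the divisibility holds throughout their supports; a more geometric point is the identification of non-bad projective points with the image of $\P^1(\Q)$, which follows from translating the valuation-theoretic definition of ``bad'' through the isomorphism $K \cong \Q(T)$.
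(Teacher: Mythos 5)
Your overall architecture (homogeneous integral parametrization, identification of the non-bad rational points with the image of $\P^1(\Q)$, a fixed-divisor bound via resultants on coprime pairs) matches the paper's proof up to its final step, but the final step as you wrote it is broken, and it is the load-bearing one. The ``standard fact'' you invoke is false: no nonconstant integer-valued polynomial can be periodic. If $\chi\in\Int(\Z^2)$ satisfied $\chi(s+M,t)=\chi(s,t)=\chi(s,t+M)$ and took only the values $0$ and $1$ on $\Z^2$, then for each fixed $t$ the univariate polynomial $\chi(\cdot,t)$ would be bounded on the infinite set $\Z$, hence constant, and likewise in the other variable; so $\chi$ is constant. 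Thus the residue-class indicators $\chi_e$ do not exist (in any number of variables, by the same argument), and your formula $g_i=U\sum_{e\mid D}(F_i/e)\chi_e$ cannot be realized. Note that you genuinely need $\chi_{d_0}(s_0,t_0)=1$ exactly (not merely some nonzero integer value) to recover primitive solutions, so the requirement cannot be relaxed. This is precisely why the paper does not construct the $g_i$ by hand: after bounding the fixed divisor by a single constant $d$, it sets $k_i=h_i/d\in\Q[U,V]$, observes that $\mc L_f\setminus\mc L_f^{\bad}=\{(w\,k_1(u,v),w\,k_2(u,v),w\,k_3(u,v))\mid u,v,w\in\Z\}\cap\Z^3$, and then invokes Proposition 2 (the implication (D)$\Rightarrow$(B) of \cite{Fr}) as a black box. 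The proof of that proposition realizes residue classes by substitutions of the form $a+Mu$ with integer coefficients and then recombines finitely many polynomial tuples into a single integer-valued tuple; that recombination is the nontrivial content your $\chi_e$ were meant to shortcut, and it cannot be done with $0/1$-valued polynomial selectors. Your mod-$M$ analysis of $d(s,t)$ is sound and close in spirit to what happens inside \cite{Fr}, but as written the proof has a hole at exactly this point.

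There is a second, repairable, error earlier: you assert that the $F_i$ are \emph{pairwise} coprime in $\Z[S,T]$. This does not follow from $\gcd(F_1,F_2,F_3)=1$ and is false in general; the paper's own Example 1 ($f=X^3+Y^3+X^2Z-2Y^2Z$) has $h_1=V(2U^2-V^2)$ and $h_2=U(2U^2-V^2)$ with common factor $2U^2-V^2$, so the resultant of $h_1,h_2$ vanishes and your direct resultant bound fails. The paper's workaround: write $t=\gcd(h_1,h_2)$, apply the resultant lemma to the coprime pair $h_1/t,\,h_2/t$ to get $\rho_1h_1+\rho_2h_2=a\,t\,U^{\delta}$, use $\gcd(h_1,h_2,h_3)=1$ to see that $\gcd(atU^{\delta},h_3)=cU^{\alpha}$, divide out and iterate, obtaining $\phi_1h_1+\phi_2h_2+\phi_3h_3=a_1U^{\delta_1}$ and symmetrically $\psi_1h_1+\psi_2h_2+\psi_3h_3=a_2V^{\delta_2}$; for coprime $(u,v)$ this yields the bound $\gcd(h_1(u,v),h_2(u,v),h_3(u,v))\mid\lcm(a_1,a_2)$. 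With that repair your fixed-divisor bound stands, but the indicator-polynomial step cannot be fixed without importing the machinery of \cite{Fr}, i.e., without proving the proposition the paper cites.
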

\medskip

The suppositions of Theorem \ref{Th1} imply that for $n \le 2$ the curve $C_f$ has no singular point. For $n=1$, $C_f$ is just a line and the result of Theorem \ref{Th1} is obvious (even with $g_i \in \Z[U,V]$). For $n=2$, we immediately obtain

\begin{Cor} \label{C1}
Let $f \in \Z[X,Y,Z]$ be an absolutely irreducible quadratic form. Then there exist polynomials $g_1, g_2, g_3 \in \Int(\Z^m)$ for some $m \in \N$ such that
\[ \mc L_f = \Bigl\{ \bigl( g_1(\underline{x}), g_2(\underline{x}), g_3(\underline{x}) \bigr) \ \Big|
\ \underline{x} \in \Z^m \Bigr\}\ .
\]
\end{Cor}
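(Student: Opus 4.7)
The plan is to derive Corollary~\ref{C1} from Theorem~\ref{Th1} by a short case analysis. The essential observation is that an absolutely irreducible quadratic form in three variables defines a smooth conic: any singular point on an irreducible plane conic would force the underlying quadratic form to factor as a product of two linear forms over $\overline{\Q}$, contradicting absolute irreducibility. Hence $C_f$ has no singular points, so $C_f^{\bad} = \emptyset$ and therefore $\mc L_f^{\bad} = \emptyset$. Consequently, the right-hand side of the displayed formula in Theorem~\ref{Th1}, whenever the theorem applies, automatically equals all of $\mc L_f$.

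It remains to verify the function field hypothesis of Theorem~\ref{Th1}. I would split into two cases according as $C_f$ possesses a $\Q$-rational point or not. If $C_f$ has no $\Q$-rational point, then any $(x,y,z) \in \mc L_f \setminus \{(0,0,0)\}$ would yield a $\Q$-rational point $(x:y:z) \in C_f$, a contradiction; hence $\mc L_f = \{(0,0,0)\}$, which is trivially parametrized by the three zero polynomials in $\Int(\Z)$. If $C_f$ does have a $\Q$-rational point $P_0$, then since $C_f$ is a smooth conic (hence of genus $0$) and $P_0$ is a regular $\Q$-rational point, the remark preceding Theorem~\ref{Th1} applies to give $\Q(C_f) \simeq \Q(T)$; concretely, stereographic projection from $P_0$ furnishes the isomorphism. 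All hypotheses of Theorem~\ref{Th1} are then in place, and the theorem directly yields the required integer-valued polynomials $g_1, g_2, g_3$.

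Since Corollary~\ref{C1} is a direct reduction to Theorem~\ref{Th1}, no serious obstacle is expected. The two mildly non-trivial inputs---smoothness of any absolutely irreducible plane conic, and rationality of the function field of a smooth conic with a $\Q$-rational point---are both classical, and both are either stated or remarked upon already in the excerpt (the former in the sentence immediately after Theorem~\ref{Th1}, the latter in the paragraph preceding it).
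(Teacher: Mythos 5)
Your proposal is correct and follows essentially the same route as the paper, which derives Corollary~\ref{C1} immediately from Theorem~\ref{Th1} after noting that for $n\le 2$ the curve $C_f$ has no singular points (so $\mc L_f^{\bad}=\emptyset$). Your only addition is to make explicit the case distinction the paper leaves implicit---if $C_f$ has no $\Q$-rational point then $\mc L_f=\{(0,0,0)\}$ is trivially parametrizable, while if it has one then the remark before Theorem~\ref{Th1} (genus $0$ plus a regular $\Q$-rational point) gives $\Q(C_f)\simeq\Q(T)$---and this is exactly the reasoning needed to justify the paper's ``we immediately obtain.''
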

\medskip

For the proof of Theorem \ref{Th1} we will use the resultant of polynomials and therefore recall some well-known results on it (cf. \cite[Chap. I, \S 9-10]{W}).\newline Given polynomials $g,h \in \Z[U,V]$ in the variables $U,V$, let $\res{V} (g,h) \in \Z[U]$ denote the resultant of $g,h$ when considered as polynomials in the variable $V$ over the ring $\Z[U]$, and, vice versa, $\res{U} (g,h) \in \Z[V]$ the resultant of $g,h$ as polynomials in $U$.

\begin{Le} \label{L1} Let $g,h \in \Z[U,V]$ be relatively prime polynomials.
\begin{itemize}
\item[\textbf{a)}] Then $\res{U} (g,h) \ne 0$ and $\res{V} (g,h) \ne 0$, and there exist polynomials $r, s, r', s' \in \Z[U,V]$ with
\[ gr + hs = \res{U} (g,h) \quad \text{and} \quad gr' + hs' = \res{V} (g,h)\,.
\]
\item[\textbf{b)}] If $g$ and $h$ are homogeneous of degree $d_1$ and $d_2$, resp., then $\res{U} (g,h)$ and $\res{V} (g,h)$ are each homogeneous of degree $d_1d_2$, and consequently
\[ \res{U} (g,h) = a\, V^{d_1d_2} \quad \text{and} \quad \res{V} (g,h) = b\, U^{d_1d_2} \quad \text{with} \quad a,b \in \Z \setminus \{0\}\,.
\]
\end{itemize}
\end{Le}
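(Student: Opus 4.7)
\medskip
\noindent\textbf{Proof plan.}
I would deduce both parts from the Sylvester matrix description of $\res{U}, \res{V}$ combined with the product (root) formula and Gauss's lemma.

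For part (a), write $g = \sum_{i=0}^{m} A_i V^i$ and $h = \sum_{j=0}^{n} B_j V^j$ with $A_i, B_j \in \Z[U]$, $m,n \ge 1$, and $A_m, B_n \ne 0$. The resultant $\res{V}(g,h)$ is the determinant of the Sylvester matrix $S \in \Z[U]^{(m+n) \times (m+n)}$. Expanding the $m+n$ polynomials $V^j g$ ($0 \le j < n$) and $V^i h$ ($0 \le i < m$) in the $\Z[U]$-basis $1, V, \dots, V^{m+n-1}$ and multiplying by the adjugate of $S$ (Cramer's rule) yields polynomials $r, s \in \Z[U,V]$ with $gr + hs = \res{V}(g,h)$. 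For nonvanishing, the product formula
\[
\res{V}(g,h) \;=\; A_m^n \prod_{i=1}^{m} h(U, \alpha_i),
\]
with $\alpha_i$ the $V$-roots of $g$ in $\overline{\Q(U)}$, shows that $\res{V}(g,h) = 0$ implies $g$ and $h$ share a common factor of positive $V$-degree in $\Q(U)[V]$; Gauss's lemma lifts such a factor to $\Z[U,V]$, contradicting coprimality. The degenerate case where $m$ or $n$ is $0$ is immediate from the Sylvester convention: if $m=0$, then $g \in \Z[U] \setminus \{0\}$ and $\res{V}(g,h) = g^n \ne 0$ with $(r,s) = (g^{n-1},0)$. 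The claims for $\res{U}$ follow by symmetry.

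For part (b), homogeneity of $g$ forces its $V$-roots to be linear in $U$: since $g(U, V) = U^{d_1}\, g(1, V/U)$, if $g(1, T) = A_{d_1} \prod_{i=1}^{d_1}(T - \beta_i)$ in $\overline{\Q}[T]$, then $g(U, V) = A_{d_1} \prod_{i=1}^{d_1}(V - \beta_i U)$, so the $V$-roots of $g$ over $\overline{\Q(U)}$ are the $\beta_i U$. Since $h$ is homogeneous of degree $d_2$, $h(U, \beta_i U) = h(1, \beta_i)\, U^{d_2}$, and the product formula gives
\[
\res{V}(g,h) \;=\; A_{d_1}^{d_2} \prod_{i=1}^{d_1} h(U, \beta_i U) \;=\; \Bigl(A_{d_1}^{d_2} \prod_{i=1}^{d_1} h(1, \beta_i)\Bigr)\, U^{d_1 d_2} \;=\; b\, U^{d_1 d_2}.
\]
Since $\res{V}(g,h) \in \Z[U]$ by the Sylvester determinant while $b \in \overline{\Q}$, necessarily $b \in \Z$; by part (a), $b \ne 0$. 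The formula for $\res{U}(g,h)$ is symmetric.

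The main technical wrinkle is the degenerate situation in (b) when the leading $V$-coefficient $A_{d_1}$ vanishes (equivalently $U \mid g$), so that $g$ has fewer than $d_1$ roots in $V$ and the computation above does not immediately deliver the full exponent $d_1 d_2$. I would handle this by factoring out the maximal powers of $U$ and $V$ from $g$ and $h$, applying the product formula to the stripped quotients, and recombining contributions from the extracted monomials to verify that the $U$-exponent in $\res{V}(g, h)$ reassembles to exactly $d_1 d_2$. This amounts to bookkeeping rather than a structural difficulty.
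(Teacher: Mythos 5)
The paper never proves Lemma~\ref{L1} at all: it is stated as a recollection of well-known facts with a pointer to Walker \cite{W}, so your argument is to be measured against the standard treatment there --- and it coincides with it. The Sylvester determinant plus the adjugate (Cramer) trick for the B\'ezout identity over $\Z[U,V]$, vanishing of the resultant precisely when there is a common factor of positive $V$-degree (with Gauss's lemma to descend from $\Q(U)[V]$ to $\Z[U,V]$), and the product formula over $\overline{\Q(U)}$ for the homogeneous case are exactly the classical route, and each of these steps is correct as you state it. Two refinements. First, the degenerate case in (b) that you leave as ``bookkeeping'' closes in one line rather than a recombination argument: if $U^k$ is the exact power of $U$ dividing $g$, then coprimality forces $U \nmid h$, so $h$ has $V$-degree exactly $d_2$ with \emph{constant} leading coefficient, and multiplicativity of the resultant gives $\res{V}(g,h)=\res{V}(U^k,h)\,\res{V}(g/U^k,h)=U^{kd_2}\cdot b'\,U^{(d_1-k)d_2}=b\,U^{d_1d_2}$, the middle factor being your own product-formula computation applied to $g/U^k$, whose leading $V$-coefficient no longer vanishes. (Alternatively, the scaling identity $R(tU)=t^{\,d_1n+d_2m-mn}R(U)$, where $m=\deg_V g$ and $n=\deg_V h$, gives the exponent $d_1d_2$ uniformly with no case distinction, since coprimality rules out $m<d_1$ and $n<d_2$ holding simultaneously.) Second, a convention caveat: when $\deg_V g=\deg_V h=0$ the empty-Sylvester convention gives $\res{V}(g,h)=1$, and then the identity in (a) can genuinely fail in $\Z[U,V]$ --- for $g=2$, $h=U$ there are no $r,s$ with $2r+Us=1$ --- while your fallback $(r,s)=(g^{\,n-1},0)$ tacitly assumes $n\ge 1$. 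This corner is immaterial for the paper's application of the lemma, but strictly speaking part (a) requires $\deg_V g+\deg_V h\ge 1$ (and the symmetric condition for $\res{U}$), a hypothesis worth making explicit in your write-up.
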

\medskip

We will also use the implication (D)$\Rightarrow$(B) of the main theorem of \cite{Fr}, which for the sake of completeness we state in the following

\begin{Pro} \label{P1} Let $k \in \N$ and suppose that $S \subset \Z^k$ is the set of integer $k$-tuples in the range of a $k$-tuple of polynomials with rational coefficients, as the variables range through the integers, i.e., there exist $h_1, \dots, h_k \in \Q[X_1, \dots, X_r]$ for some  $r \in \N$ such that
\[S=\{ (h_1(\underline{x}),\dots, h_k(\underline{x})) \mid \underline{x}\in\Z^r \} \cap \Z^k\ .
\]
Then $S$ is parametrizable by a $k$-tuple of integer-valued polynomials, i.e., there exist $g_1, \dots, g_k \in \Int(\Z^m)$ for some $m \in \N$ such that
\[S= \{ (g_1(\underline{x}), \dots, g_k(\underline{x})) \mid \underline{x} \in \Z^m \}\ .
\]
\end{Pro}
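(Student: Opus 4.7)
The plan is to reduce to finitely many polynomial parametrizations with integer coefficients, and then assemble them into a single integer-valued polynomial parametrization. Let $D \in \N$ be a common denominator for the coefficients of $h_1, \ldots, h_k$, so that $H_i := D h_i \in \Z[X_1, \ldots, X_r]$. Expanding $H_i(\underline{a} + D\underline{z})$ in $\underline{z}$ gives $H_i(\underline{a}) + D \cdot (\text{polynomial in } \underline{z})$, so the class of $H_i(\underline{x})$ modulo $D$ depends only on $\underline{x}$ modulo $D$. Hence the ``good'' residue set
\[ A := \{\underline{a} \in \{0,1,\ldots,D-1\}^r \mid D \mid H_i(\underline{a}) \text{ for every } i\} \]
consists of exactly those residues for which $h(\underline{x}) \in \Z^k$ whenever $\underline{x} \equiv \underline{a} \pmod D$.

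For each $\underline{a} \in A$ the polynomial $\psi^{\underline{a}}(\underline{y}) := h(\underline{a} + D\underline{y})$ has components in $\Z[Y_1,\ldots,Y_r]$, since $\tfrac{1}{D}H_i(\underline{a}+D\underline{y}) = \tfrac{1}{D}H_i(\underline{a}) + (\text{integer polynomial in } \underline{y})$ and $\tfrac{1}{D}H_i(\underline{a}) \in \Z$. Hence $S = \bigcup_{\underline{a}\in A} \psi^{\underline{a}}(\Z^r)$ is a finite union of images of $\Z$-coefficient polynomial maps. Equivalently, setting $T := \{\underline{x} \in \Z^r \mid \underline{x} \bmod D \in A\}$, we have $S = h(T)$, and it suffices to parametrize $T$ by a single integer-valued polynomial map: then composition with $h$ gives the desired parametrization of $S$, since $h$ takes integer values on $T$ by construction.

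To parametrize $T$, I would look for an integer-valued polynomial map $\underline{\alpha} \in \Int(\Z^s)^r$ whose image modulo $D$ is exactly $A$, and then use
\[ (\underline{t},\underline{y}) \longmapsto \underline{\alpha}(\underline{t}) + D\underline{y}, \qquad \Z^{s+r} \to \Z^r, \]
whose image is $\underline{\alpha}(\Z^s) + D\Z^r = A + D\Z^r = T$. Composition with $h$ then gives $(\underline{t},\underline{y}) \mapsto \tfrac{1}{D} H(\underline{\alpha}(\underline{t}) + D\underline{y})$, a $\Q$-polynomial in $(\underline{t},\underline{y})$ that takes integer values on $\Z^{s+r}$ (because the first summand lies in $A$ modulo $D$), hence lies in $\Int(\Z^{s+r})^k$, with image exactly $h(T) = S$.

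The principal obstacle is the construction of $\underline{\alpha}$: realizing a prescribed finite set $A \subset (\Z/D\Z)^r$ as the image modulo $D$ of an integer-valued polynomial map. This is where the flexibility of $\Int(\Z^s)$ beyond $\Z[\underline{X}]$ becomes essential --- for instance, $\alpha(s) = -s^2$ achieves image $\{0,3\} \bmod 4$, while no $\Z$-linear polynomial could. For the general construction I plan to use the Chinese Remainder Theorem to split $D$ into prime powers $p^e$, and for each $p^e$ build local selectors from the binomial-coefficient integer-valued polynomials $\binom{t}{n}$, whose residues modulo $p^e$ follow well-understood digit patterns via Lucas' theorem. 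Combining enough such selectors, with sufficiently many auxiliary variables, yields an $\underline{\alpha}$ realizing any prescribed $A$. This selector construction is the technical heart of the implication (D)$\Rightarrow$(B) of the main theorem of \cite{Fr}.
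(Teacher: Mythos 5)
The paper itself gives no proof of Proposition~\ref{P1} --- it is quoted verbatim as the implication (D)$\Rightarrow$(B) of the main theorem of \cite{Fr} --- so your argument has to be judged on its own terms; its first half (splitting $\Z^r$ into residue classes modulo a common denominator $D$) does coincide with the natural route, and that reduction is correct: with $H_i = Dh_i$, the set $T=\{\underline{x}\in\Z^r \mid h(\underline{x})\in\Z^k\}$ equals $A+D\Z^r$, one has $S=h(T)$, and composing $h$ with any integer-valued map onto $T$ yields an integer-valued parametrization of $S$. The genuine gap is your pivotal claim that \emph{any} prescribed $A\subseteq(\Z/D\Z)^r$ can be realized as the exact image modulo $D$ of an integer-valued polynomial map. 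This is false. Write $\underline{\alpha}\in\Int(\Z^s)^r$ in the binomial basis $\prod_j\binom{x_j}{k_j}$ (with integer coefficients); then for each prime power $p^{e}$ exactly dividing $D$, the reduction $\underline{\alpha}\bmod p^{e}$ factors through $(\Z/p^{E})^s$ for some $E$, by $p$-adic uniform continuity of the binomials (this is precisely the Lucas phenomenon you invoke). Since, by the Chinese Remainder Theorem, the tuples $(\underline{t}\bmod p^{E})_{p\mid D}$ run through \emph{all} combinations independently as $\underline{t}$ runs through $\Z^s$, the image of $\underline{\alpha}$ modulo $D$ is forced to be a CRT product $\prod_{p\mid D}A_p$. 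Thus, for instance, $r=1$, $D=6$, $A=\{0,1\}$ is unattainable in any number of variables: the images mod $2$ and mod $3$ would each have to be $\{0,1\}$, so the image mod $6$ would contain the class $4$ (which is $0$ mod $2$ and $1$ mod $3$). Your example $\alpha(s)=-s^2$ with image $\{0,3\}$ mod $4$ works only because $4$ is a prime power.

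Fortunately, the particular $A$ your proof needs is not arbitrary: it is cut out by the simultaneous integer congruences $H_i(\underline{a})\equiv 0 \pmod D$, and since the $H_i$ have integer coefficients these conditions decompose under CRT, so your $A$ \emph{is} automatically a product $\prod_p A_p$ with $A_p=\{\underline{a}\bmod p^{e_p} \mid H_i(\underline{a})\equiv 0 \pmod{p^{e_p}} \text{ for all } i\}$. Your write-up never observes this, and without it the final step fails; with it, the plan completes as follows. For each prime power $p^{e_p}\,\Vert\, D$, realize $A_p$ as the exact image mod $p^{e_p}$ of some $\underline{\alpha}_p\in\Int(\Z^r)^r$: choose any function $(\Z/p^{e_p})^r\to(\Z/p^{e_p})^r$ with image exactly $A_p$, lift it to a locally constant function on $\Z_p^r$, and round a truncated Mahler expansion to integer coefficients --- this is the multivariable version of your binomial-coefficient selectors, and it shows every such function is congruent mod $p^{e_p}$ to an integer-valued polynomial map. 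Then glue on \emph{disjoint} blocks of variables with CRT constants $c_p\equiv 1\pmod{p^{e_p}}$, $c_p\equiv 0\pmod{q^{e_q}}$ for $q\neq p$, setting $\underline{\alpha}=\sum_p c_p\,\underline{\alpha}_p(\underline{t}_p)$; its image mod $D$ is exactly $\prod_p A_p=A$, and the rest of your argument (adding $D\underline{y}$ and composing with $h$) goes through. Two small further points: the degenerate case $A=\emptyset$, i.e.\ $S=\emptyset$, must be excluded, since a nonempty-image polynomial map can never parametrize it (an implicit assumption already in the Proposition as stated); and your closing attribution of this selector construction to \cite{Fr} cannot be checked against the present paper, which only cites that reference.
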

\medskip

\begin{proof}[Proof of Theorem \ref{Th1}]
Let $f$ be as in the statement of the theorem. Then there exist homogeneous polynomials $h_1, h_2, h_3 \in \Q[U,V]$ such that
\[ (X, Y, Z) = \Bigl( h_1(U,V), h_2(U,V), h_3(U,V) \Bigr)
\]
defines a birational (projective) isomorphism between $C_f$ and the projective line. We may assume $h_1, h_2, h_3 \in \Z[U,V]$ and $\gcd(h_1, h_2, h_3) = 1$ (see, for instance, \cite[Sect. 2]{PV}).

For every $\Q$-rational point $(u:v) \in \P^1(\Q)$, $\bigl( h_1(u,v): h_2(u,v): h_3(u,v) \bigr)$ is the evaluation of the birational isomorphism at this point. This means that $\bigl( h_1(u,v) : h_2(u,v) : h_3(u,v) \bigr)$ is a $\Q$-rational point of $C_f$ and its local ring is contained in some discrete valuation ring of $K$ of degree $1$. Therefore
\begin{multline*} \mc L_{\Q} := \Bigl\{ \bigl( w\,h_1(u,v), w\,h_2(u,v), w\,h_3(u,v) \bigr) \ \Big| \ u,v,w \in \Q \Bigr\} = \\
\Bigl\{ \bigl( w\,h_1(u,v), w\,h_2(u,v), w\,h_3(u,v) \bigr) \ \Big| \ w \in \Q,\ u,v \in \Z \text{ with } \gcd (u,v)=1 \Bigr\}
\end{multline*}
is exactly the set of all rational solutions of \eqref{eq1} except for those corresponding to points of $C_f^{\bad}$, and $\mc L_f \setminus \mc L_f^{\bad} = \mc L_{\Q} \cap \Z^3$ is just the set of all integral triples of $\mc L_{\Q}$.

We claim that there exists some $d \in \N$ such that for all $u, v \in \Z$ with $\gcd (u,v)=1$ it follows that
\[ \gcd \bigl( h_1(u,v), h_2(u,v), h_3(u,v) \bigr) \bigm | d\ .
\]

\noindent Let $\gcd (h_1, h_2) = t \in \Z[U,V]$ and put $h_i = t\,h_i'$ with $h_i' \in \Z[U,V]$, $i=1,2$. Since $h_1', h_2'$ are relatively prime, we obtain that $\res{V} (h_1', h_2') = a\, U^\delta$ with some $0 \ne a \in \Z$ and $\delta \ge 0$, and polynomials $\rho_1, \rho_2 \in \Z[U,V]$ with $\rho_1 h_1 + \rho_2 h_2 = at U^\delta$. Since $h_1$, $h_2$, $h_3$ were assumed to be relatively prime, $\gcd ( atU^\delta, h_3) = cU^\alpha$ with $c \in \Z$ and $0 \le \alpha \le \delta$. Dividing both $atU^\delta$ and $h_3$ by $cU^\alpha$ and applying the same reasoning as above we finally obtain that there are $0 \ne a_1 \in \Z$, $\delta_1 \ge 0$ and polynomials $\phi_1, \phi_2, \phi_3 \in \Z[U,V]$ with
\begin{equation}\label{eq2}
\phi_1 h_1 + \phi_2 h_2 + \phi_3 h_3 = a_1 U^{\delta_1}\ .
\end{equation}

\noindent Using $\res{U}$ in the same way, we obtain polynomials $\psi_1, \psi_2, \psi_3 \in \Z[U,V]$, $0 \ne a_2 \in \Z$ and $\delta_2 \ge 0$ such that
\begin{equation}\label{eq3}
\psi_1 h_1 + \psi_2 h_2 + \psi_3 h_3 = a_2 V^{\delta_2}\ .
\end{equation}

\noindent For any $u,v \in \Z$ with $\gcd (u,v) =1$, \eqref{eq2} and \eqref{eq3} imply that $\gcd \bigl( h_1(u,v), h_2(u,v), h_3(u,v) \bigr)$ divides both $a_1 u^{\delta_1}$ and $a_2 v^{\delta_2}$. It follows that
\[ \gcd \bigl( h_1(u,v), h_2(u,v), h_3(u,v) \bigr) \bigm | \lcm (a_1, a_2) := d\ .
\]

\noindent So we obtain polynomials $k_i = \frac 1 d h_i \in \Q[U,V]$ with rational coefficients such that
\[ \mc L_f \setminus \mc L_f^{\bad} = \Bigl\{ \bigl( w\,k_1(u,v), w\,k_2(u,v), w\,k_3(u,v) \bigr) \ \Big| \ u,v,w \in \Z \Bigr\} \cap \Z^3\ .
\]

\noindent Now we apply Proposition \ref{P1}, which yields the assertion of Theorem \ref{Th1}.
\end{proof}
\pagebreak

\begin{Rem}
If the integers $a_1, a_2$ appearing in \eqref{eq2} and \eqref{eq3} in the proof of Theorem \ref{Th1} are both equal to $1$, then $k_i = h_i \in \Z[U,V]$ and $\mc L_f \setminus \mc L_f^{\bad}$ can actually be parametrized by a triple of polynomials with integral coefficients (compare Example 2 below).\newline
When applying Proposition \ref{P1}, we have no information about the number $m$ of variables of the integer-valued polynomials $g_i$ appearing in Theorem \ref{Th1}.
\end{Rem}
\medskip

\begin{Ex} This example shows that for $n \ge 3$ ``bad'' singular points may appear.\break Consider
\[ f = X^3 + Y^3 + X^2Z - 2Y^2Z \in \Z[X,Y,Z]\,.
\]
Then $(0:0:1) \in C_f$ is a singular point. Only one discrete valuation ring lies over the local ring $R_{(0:0:1)}$, and this valuation ring has residue class field isomorphic to $\Q(\sqrt 2)$.\break A birational (projective) isomorphism between $C_f$ and the projective line is given by
\[ (X:Y:Z) = \Bigl( (V(2U^2-V^2)) : (U(2U^2-V^2)) : (V^3+U^3) \Bigr)\,,
\]
but there is no $\Q$-rational point $(u:v) \in \P^1(\Q)$ corrsponding to the singular point $(0 : 0 : 1)$. Indeed, the corresponding point $(u:v) = (1 : \sqrt 2)$ is only defined over $\Q(\sqrt 2)$.
\end{Ex}
\medskip

\begin{Ex} In contrast to the Pythagorean triples (corresponding to the unit circle, see \cite{FrV}), we know that for the equilateral hyperbola the set $\mc L_f$ can be parametrized by a single triple of polynomials with integer coefficients. Let
\[ f = XY - Z^2 \in \Z[X,Y,Z]\,.
\]
All $\Q$-rational points of $C_f$ are given by $(u^2 : v^2 : uv)$ with $(u :v) \in \P^1 (\Q)$. If $u,v \in \Z$ with $\gcd(u,v)=1$ then also $\gcd(u^2, v^2, uv) =1$. So the set of all integral solutions of $XY - Z^2 = 0$ is given by
\[ \{ (u^2w, v^2w, uvw) \mid u,v,w \in \Z \}\,.
\]
\end{Ex}
\bigskip

\end{document}